\numberwithin{equation}{section}
\newtheorem{theorem}{Theorem}[section]
\newtheorem*{theorem*}{Theorem}
\newtheorem{corollary}[theorem]{Corollary}
\newtheorem{lemma}[theorem]{Lemma}
\newtheorem{proposition}[theorem]{Proposition}
\newtheorem{problem}[theorem]{Problem}
\providecommand{\customgenericname}{}
\newcommand{\newcustomtheorem}[2]{%
    \newenvironment{#1}[1]
    {%
        \renewcommand\customgenericname{#2}%
        \renewcommand\theinnercustomgeneric{##1}%
        \innercustomgeneric
    }
    {\endinnercustomgeneric}
}
\theoremstyle{definition}
\newtheorem{definition}[theorem]{Definition}
\newtheorem{example}[theorem]{Example}
\newtheorem*{example*}{Example}
\newtheorem*{examples*}{Examples}
\newtheorem{remark}[theorem]{Remark}
\newtheorem*{remark*}{Remark}
\newtheorem*{remarks*}{Remarks}
\newtheoremstyle{named}{}{}{\itshape}{}{\bfseries}{.}{.5em}{#1\thmnote{ #3}}
\theoremstyle{named}
\newcommand{\mex}{\operatorname{mex}}
\title[Moments of the minimal partition excludant]{Asymptotics for moments of the minimal partition excludant
in congruence classes}
\author[S. Chern]{Shane Chern}
\address[S. Chern]{Department of Mathematics
 and Statistics, Dalhousie University, Halifax, NS, B3H 4R2, Canada}
\email{chenxiaohang92@gmail.com}
\author[E. X. W. Xia]{Ernest X. W. Xia}
\address[E. X. W. Xia]{School of Mathematical Sciences, Suzhou University of Science and Technology, Suzhou, 215009, Jiangsu Province, P. R. China}
\email{ernestxwxia@163.com}
\date{}
\keywords{Integer partition, minimal excludant, congruence class,
asymptotic formula.}
\subjclass[2020]{11P82, 05A16, 05A17.}
\begin{document}

\maketitle

\begin{abstract}
    The minimal excludant statistic, which denotes the smallest positive integer that is not a part of an integer partition, has received great interest in recent years. In this paper, we move on to the smallest positive integer whose frequency is less than a given number. We establish an asymptotic formula for the moments of such generalized minimal excludants that fall in a specific congruence class. In particular, our estimation reveals that the moments associated with a fixed modulus are asymptotically ``equal''.
\end{abstract}

\section{Introduction}

For a given natural number $n$, we call a collection of weakly
decreasing positive integers $\pi = (\pi_1,\pi_2,\ldots,\pi_\ell)$ a
\emph{partition} of $n$ if $|\pi|:=\sum \pi_i = n$, while the
numbers $\pi_i$ are called \emph{parts} of this partition. We use
the notation $\pi \vdash n$ to represent that $\pi$ is a partition
of $n$.

The study of integer partitions and their counting traces back to
Leibniz in his correspondence with Jacob Bernoulli
\cite[pp.~740--755]{Lei1990}, and stays vibrant in modern number
theory and combinatorics, partially through various statistics
associated with partitions. One recent example is the \emph{minimal
excludant} statistic, abbreviated \emph{mex}, which was named by
Andrews and Newman \cite{AD2019} based on a terminology in
combinatorial game theory \cite{FP2015}.

\begin{definition}
    The \emph{minimal excludant} of an integer partition $\pi$, denoted by $\mex(\pi)$, is the smallest positive integer such that it is not a part of $\pi$.
\end{definition}

Notably, the study of this statistic was launched earlier by Andrews
himself in \cite{And2011} under the name of ``the smallest part that
is not a summand,'' which is semantically more accessible, and even
earlier by Grabner and Knopfmacher \cite{GK2006}, who called it
``the least gap.'' In recent years, the mex statistic has been found
to exhibit connections with a handful of objects such as Dyson's
crank \cite{HSS2022,HSY2022}, Schmidt-type partitions
\cite{WX2023,WZZ2023}, unrefinable partitions
\cite{ACCL2022,ACGS2022}, and the super Yang--Mills theory
\cite{HO2024}.

Recall that the \emph{frequency} of a part $\pi_i$ is the number of
its occurrence in the partition $\pi$. Hence, the mex of $\pi$ can
be paraphrased as the smallest positive integer whose frequency is
less than one. Along this line, we may generalize the mex statistic
as follows.

\begin{definition}
    Letting $s$ be a positive integer, we define the \emph{minimal excludant of frequency $s$} (abbr.~\emph{$s$-mex}) of an integer partition $\pi$, denoted by $\mex^{[s]}(\pi)$, as the smallest positive integer such that its frequency in $\pi$ is less than $s$.
\end{definition}

With this definition, the $1$-mex reduces to the usual mex
statistic. It is also notable that
 consideration along this line is
not fresh; for example,
 Wagner \cite{Wag2011} considered
probabilistic aspects of this family of statistics, and Ballantine
and Merca \cite{BM2020} entered this topic from a combinatorial
perspective.

\begin{example}
    Considering the partition $\pi=(6,4,3,3,2,2,2,1,1)$, we have
    \begin{align*}
        \mex^{[1]}(\pi) &= 5,\\
        \mex^{[2]}(\pi) &= 4,\\
        \mex^{[s]}(\pi) &= 1 \quad (s\ge 3).
    \end{align*}
\end{example}

Given a combinatorial statistic, it is of general interest to
consider its arithmetic behavior. Therefore, our first objective
revolves around partitions whose $s$-mex falls in a specific
congruence class. In particular, letting $A$ and $M$ with $0< A\le
M$ be integers, we are interested in the following moments:
\begin{align}
    \sigma_{M,A}^{[s]}(r;n) := \sum_{\substack{\pi\vdash n\\ \mex^{[s]}(\pi)\equiv A \bmod M}} \big(\mex^{[s]}(\pi)\big)^r,
\end{align}
where $r$ is a nonnegative integer.

\begin{theorem}\label{th:sigma-asymp}
    For integers $A$ and $M$ with $0< A\le M$, nonnegative integers $r$ and positive integers $s$, we have, as $n\to \infty$,
    \begin{align}\label{eq:sigma-asymp}
        \sigma_{M,A}^{[s]}(r;n) \sim \begin{cases}
            2^{-2}3^{-\frac{1}{2}}M^{-1}n^{-1}e^{\pi \sqrt{\frac{2n}{3}}}, & \text{if $r=0$},\\[6pt]
            2^{\frac{3r-12}{4}} 3^{\frac{r-2}{4}} \pi^{-\frac{r}{2}} M^{-1}s^{-\frac{r}{2}} r\,\Gamma(\tfrac{r}{2})\, n^{\frac{r-4}{4}} e^{\pi \sqrt{\frac{2n}{3}}} , & \text{if $r\ge 1$},
        \end{cases}
    \end{align}
    where $\Gamma(z)$ is Euler's gamma function.
\end{theorem}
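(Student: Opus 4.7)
The strategy is to recast $\sigma_{M,A}^{[s]}(r;n)$ as the coefficient of $q^n$ in an explicit series, isolate the behavior of the auxiliary factor as $q \to 1^-$, and extract the coefficient via a Hardy--Ramanujan-style saddle-point argument. A partition $\pi$ has $\mex^{[s]}(\pi) = m$ precisely when each of $1, 2, \ldots, m-1$ occurs at least $s$ times, $m$ itself occurs strictly fewer than $s$ times, and parts exceeding $m$ are unrestricted. Multiplying the corresponding factors yields
\begin{align*}
\sum_{\pi:\,\mex^{[s]}(\pi) = m} q^{|\pi|} = \frac{q^{s\binom{m}{2}}(1 - q^{sm})}{(q;q)_\infty},
\end{align*}
whence
\begin{align*}
\sum_{n\ge 0} \sigma_{M,A}^{[s]}(r;n)\, q^n = \frac{B(q)}{(q;q)_\infty}, \qquad B(q) := \sum_{\substack{m\ge 1 \\ m \equiv A \bmod M}} m^r\, q^{s\binom{m}{2}}(1 - q^{sm}).
\end{align*}

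Set $q = e^{-\rho}$ with $\rho \to 0^+$. Since $q^{s\binom{m}{2}} = e^{-s\rho m(m-1)/2}$ localizes the sum to $m \asymp 1/\sqrt{\rho}$ and $1 - q^{sm} \sim sm\rho$ on that scale, the substitution $u = m\sqrt{\rho}$ combined with treating the arithmetic-progression sum as a Riemann sum of spacing $M$ gives, for $r \ge 1$,
\begin{align*}
B(e^{-\rho}) \sim \frac{s}{M}\, \rho^{-r/2} \int_0^\infty u^{r+1} e^{-su^2/2}\,du = \frac{2^{r/2-1}\, r\, \Gamma(r/2)}{M\, s^{r/2}}\, \rho^{-r/2}.
\end{align*}
For $r = 0$ this collapses; instead, the telescoping $q^{s\binom{m}{2}}(1 - q^{sm}) = q^{s\binom{m}{2}} - q^{s\binom{m+1}{2}}$ together with the same Riemann-sum approximation gives $B(e^{-\rho}) \to 1/M$. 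Euler--Maclaurin supplies explicit error control in both cases.

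Combining this with the classical modular asymptotic $(q;q)_\infty^{-1} \sim \sqrt{\rho/(2\pi)}\, e^{\pi^2/(6\rho)}$ and feeding $B(q)/(q;q)_\infty$ into Cauchy's integral around $|q| = e^{-\rho^*}$ at the saddle point $\rho^* = \pi/\sqrt{6n}$, a standard major/minor-arc analysis (as in Wright's circle method, or via Ingham's Tauberian theorem) yields
\begin{align*}
[q^n]\, \frac{B(q)}{(q;q)_\infty} \sim B(e^{-\rho^*}) \cdot [q^n]\, \frac{1}{(q;q)_\infty} \sim B(e^{-\rho^*}) \cdot \frac{e^{\pi \sqrt{2n/3}}}{4\sqrt{3}\, n}.
\end{align*}
Plugging $(\rho^*)^{-r/2} = 2^{r/4}\,3^{r/4}\,\pi^{-r/2}\,n^{r/4}$ into the formula for $B(e^{-\rho^*})$ and consolidating the resulting powers of $2$ and $3$ recovers (\ref{eq:sigma-asymp}); the $r = 0$ case follows from $B(e^{-\rho^*}) \to 1/M$.

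The delicate point is upgrading the real-axis asymptotic $B(e^{-\rho}) \sim C\rho^{-r/2}$ to a bound valid uniformly for $q = e^{-\rho + 2\pi i\theta}$ on the major arc, while still being dominated by the $\exp(\pi^2/(6\rho))$ gain from $(q;q)_\infty^{-1}$ against the minor arcs. In practice one either establishes a complex-analytic Euler--Maclaurin estimate for $B(e^{-\rho + 2\pi i\theta})$ uniformly in $\theta$, or invokes Ingham's Tauberian theorem, which short-circuits the contour analysis at the cost of checking monotonicity of the partial coefficient sums of $B(q)/(q;q)_\infty$. Everything downstream is routine bookkeeping of gamma factors and prime powers.
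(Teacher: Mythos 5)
Your generating function and your real-axis asymptotics are sound and match the paper's: the Riemann-sum/Laplace evaluation of $B(e^{-\rho})$ gives exactly the paper's leading term $2^{\frac{r-2}{2}}M^{-1}s^{-\frac{r}{2}}r\,\Gamma(\tfrac r2)\,\rho^{-\frac r2}$ for $r\ge 1$ (the paper obtains it by applying Zagier's Euler--Maclaurin expansion separately to the two partial theta sums $S_1$ and $S_2$ and cancelling their common $\rho^{-\frac{r+1}{2}}$ leading terms, whereas you build the cancellation in from the start via $1-q^{sm}\sim sm\rho$; the two computations are equivalent), your $r=0$ limit $1/M$ is correct, and the bookkeeping at $\rho^*=\pi/\sqrt{6n}$ does reproduce \eqref{eq:sigma-asymp}.

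The genuine gap is the coefficient-extraction step, which you correctly identify as the delicate point but do not execute on either route. If you use Ingham's Tauberian theorem (the paper's route), the hypothesis to verify is that the coefficients $\sigma_{M,A}^{[s]}(r;n)$ themselves are eventually nondecreasing; your phrase ``monotonicity of the partial coefficient sums'' is not the right condition, since partial sums of nonnegative coefficients are trivially nondecreasing and that weaker property would only give asymptotics for $\sum_{m\le n}\sigma_{M,A}^{[s]}(r;m)$, not for the individual moments. This verification is not a formality and occupies roughly half of the paper's proof: one shows $(1-q)\sum_{n}\sigma_{M,A}^{[s]}(r;n)q^n$ has nonnegative coefficients, which is immediate for $s\ge 2$ (because $s(Mn+A)\ne 1$), but for $s=1$ the $n=0$ term must be split off and treated separately, and the case $s=1$, $A=1$ requires a combinatorial injection on partitions with no part equal to $1$, which fails exactly once and yields only \emph{eventual} monotonicity. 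If instead you go through Cauchy's integral and a Wright-style major/minor-arc analysis, you would need a uniform estimate for $B(e^{-\rho+2\pi i\theta})$ on the major arc together with minor-arc bounds, none of which is supplied. Until one of these two programs is carried out, the step $[q^n]\,B(q)/(q;q)_\infty \sim B(e^{-\rho^*})\,[q^n]\,(q;q)_\infty^{-1}$ remains a heuristic rather than a proof.
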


As a corollary, we notice that the moments associated with a fixed
modulus are asymptotically ``equal''.

\begin{corollary}\label{eq:sigma-asymp-eq}
    For integers $A$, $A'$ and $M$ with $0< A,A'\le M$, nonnegative integers $r$ and positive integers $s$,
    \begin{align}
        \lim_{n\to \infty} \frac{\sigma_{M,A}^{[s]}(r;n)}{\sigma_{M,A'}^{[s]}(r;n)} = 1.
    \end{align}
\end{corollary}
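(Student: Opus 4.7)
The plan is to deduce the corollary directly from Theorem \ref{th:sigma-asymp}, so essentially no new work is required beyond invoking that asymptotic. First I would record the key observation that the right-hand side of \eqref{eq:sigma-asymp} depends only on the modulus $M$, the moment order $r$, the frequency parameter $s$, and the size $n$, but \emph{not} on the residue $A$ itself. Thus the same leading-order expression governs both $\sigma_{M,A}^{[s]}(r;n)$ and $\sigma_{M,A'}^{[s]}(r;n)$.

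Concretely, I would split into two cases according to the dichotomy already present in Theorem \ref{th:sigma-asymp}. In the case $r=0$, apply the first branch of \eqref{eq:sigma-asymp} to numerator and denominator: both are asymptotic to $2^{-2}3^{-1/2}M^{-1}n^{-1}e^{\pi\sqrt{2n/3}}$, and so their ratio tends to $1$. In the case $r \geq 1$, apply the second branch: both quantities are asymptotic to
\[
2^{(3r-12)/4}3^{(r-2)/4}\pi^{-r/2}M^{-1}s^{-r/2}r\,\Gamma(\tfrac{r}{2})\,n^{(r-4)/4}e^{\pi\sqrt{2n/3}},
\]
and again the ratio tends to $1$. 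Formally this is just the standard fact that $f\sim g$ and $f\sim h$ imply $g/h\to 1$.

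There is no genuine obstacle here; the corollary is essentially a restatement of the $A$-independence of the main term in Theorem \ref{th:sigma-asymp}. The only point worth flagging is that one must verify the main term is nonzero for each admissible choice of $r$ and $s$, so that dividing by $\sigma_{M,A'}^{[s]}(r;n)$ is legitimate for all sufficiently large $n$; this is immediate since the constants $2^{-2}3^{-1/2}M^{-1}$ and $2^{(3r-12)/4}3^{(r-2)/4}\pi^{-r/2}M^{-1}s^{-r/2}r\,\Gamma(r/2)$ are strictly positive, and the exponential factor $e^{\pi\sqrt{2n/3}}$ dominates any polynomial prefactor. Hence the ratio is well-defined for $n$ large and converges to $1$, completing the proof.
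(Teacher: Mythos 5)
Your proposal is correct and matches the paper's treatment: the corollary is stated there as an immediate consequence of Theorem \ref{th:sigma-asymp}, precisely because the asymptotic main term in \eqref{eq:sigma-asymp} does not depend on the residue $A$, so the two moments share the same asymptotics and their ratio tends to $1$. Your extra remark about positivity of the main term (ensuring the denominator is eventually nonzero) is a sensible, if routine, point of care.
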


In \cite{AD2019}, Andrews and Newman also defined the \emph{minimal
odd excludant} to be the smallest odd positive integer that is not a
part of a partition $\pi$. It is notable that this terminology
should not be confused with the minimal excludant that is odd. In a
subsequent paper \cite{AD2020}, Andrews and Newman further extended
this idea to arbitrary congruence classes, by defining
$\mex_{M,A}(\pi)$ as the smallest positive integer congruent to $A$
modulo $M$ such that it is not a part of $\pi$. Now we may similarly
introduce restrictions on the frequency of parts.

\begin{definition}
    Letting $s$ be a positive integer and $A$ and $M$ with $0< A\le M$ be integers, we define the \emph{minimal excludant of frequency $s$ in the congruence class $A$ modulo $M$} (abbr.~\emph{$s_{M,A}$-mex}) of an integer partition $\pi$, denoted by $\mex_{M,A}^{[s]}(\pi)$, as the smallest positive integer congruent to $A$ modulo $M$ such that its frequency in $\pi$ is less than $s$.
\end{definition}

Thus, $\mex_{M,A}^{[1]}(\pi) = \mex_{M,A}(\pi)$. It is also notable
that the $s_{M,A}$-mex reduces to the $s$-mex when the pair $(M,A)$
takes $(1,1)$.

\begin{example}
    Considering the partition $\pi=(6,4,3,3,2,2,2,1,1)$, we have
    \begin{alignat*}{2}
        \mex^{[1]}_{2,1}(\pi) &= 5, &\qquad \mex^{[1]}_{2,2}(\pi) &= 8,\\
        \mex^{[2]}_{2,1}(\pi) &= 5, & \mex^{[2]}_{2,2}(\pi) &= 4,\\
        \mex^{[3]}_{2,1}(\pi) &= 1, & \mex^{[3]}_{2,2}(\pi) &= 4,\\
        \mex^{[s]}_{2,1}(\pi) &= 1, & \mex^{[s]}_{2,2}(\pi) &= 2 \quad (s\ge 4).
    \end{alignat*}
\end{example}

Our second objective focuses on the moments of the $s_{M,A}$-mex:
\begin{align}
    \varsigma_{M,A}^{[s]}(r;n) := \sum_{\pi\vdash n} \big(\mex_{M,A}^{[s]}(\pi)\big)^r,
\end{align}
where $r$ is a nonnegative integer. It is clear that when $r=0$,
\begin{align}\label{eq:varsigma-0-p}
    \varsigma_{M,A}^{[s]}(0;n) = \sum_{\pi\vdash n} 1 = p(n),
\end{align}
where the \emph{partition function} $p(n)$ counts the number of
partitions of $n$. It is a standard result due to Hardy and
Ramanujan \cite[p.~79, eq.~(1.41)]{HR1918} that
\begin{align*}
    p(n)\sim 2^{-2}3^{-\frac{1}{2}}n^{-1}e^{\pi \sqrt{\frac{2n}{3}}}.
\end{align*}
Hence, for any choice of $A$, $M$ and $s$,
\begin{align}
    \varsigma_{M,A}^{[s]}(0;n) \sim 2^{-2}3^{-\frac{1}{2}}n^{-1}e^{\pi \sqrt{\frac{2n}{3}}}.
\end{align}
In what follows, we focus on the cases where $r\ge 1$.

\begin{theorem}\label{th:varsigma-asymp}
    For integers $A$ and $M$ with $0< A\le M$, \textbf{positive} integers $r$ and positive integers $s$, we have, as $n\to \infty$,
    \begin{align}\label{eq:varsigma-asymp}
        \varsigma_{M,A}^{[s]}(r;n) \sim 2^{\frac{3r-12}{4}} 3^{\frac{r-2}{4}} \pi^{-\frac{r}{2}} M^{\frac{r}{2}} s^{-\frac{r}{2}} r\,\Gamma(\tfrac{r}{2})\, n^{\frac{r-4}{4}} e^{\pi \sqrt{\frac{2n}{3}}}.
    \end{align}
\end{theorem}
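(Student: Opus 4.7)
The plan is to proceed by a generating-function and saddle-point analysis that closely parallels the proof of Theorem~\ref{th:sigma-asymp}.

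First, I would derive a closed form for
\[
F(q) := \sum_{n\geq 0}\varsigma_{M,A}^{[s]}(r;n)\,q^n.
\]
Conditioning on $\mex_{M,A}^{[s]}(\pi)=A+Mk$ forces the parts $A,A+M,\dots,A+M(k-1)$ to appear at least $s$ times each, the part $A+Mk$ to appear fewer than $s$ times, and leaves every other positive integer part (whether inside or outside the progression $A\bmod M$) unrestricted. Multiplying the corresponding Euler factors, the bulk terms and the unrestricted parts combine to reconstitute $1/(q;q)_\infty$, giving
\[
F(q) = \frac{1}{(q;q)_\infty}\sum_{k\geq 0}(A+Mk)^r\bigl(q^{T_k}-q^{T_{k+1}}\bigr),\qquad T_k := s\Bigl(Ak+M\binom{k}{2}\Bigr).
\]

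Next, I would asymptotically evaluate $G(q):=\sum_{k\geq 0}(A+Mk)^r(q^{T_k}-q^{T_{k+1}})$ at the Hardy--Ramanujan saddle $q=e^{-\tau}$ with $\tau=\pi/\sqrt{6n}$. By Abel summation,
\[
G(e^{-\tau})=A^r+\sum_{k\geq 1}\bigl[(A+Mk)^r-(A+M(k-1))^r\bigr]e^{-\tau T_k}\sim rM\sum_{k\geq 1}(A+Mk)^{r-1}e^{-\tau T_k}.
\]
Since $T_k\sim \tfrac{1}{2}sMk^2$, the dominant $k$ lies on the scale $(sM\tau)^{-1/2}\sim n^{1/4}$, and a Riemann-sum/Laplace approximation should yield
\[
G(e^{-\tau})\sim rM^{r}\int_0^\infty x^{r-1}e^{-\tau sMx^2/2}\,dx = r\cdot 2^{r/2-1}M^{r/2}(s\tau)^{-r/2}\Gamma(\tfrac{r}{2}).
\]

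Finally, combining this with the Hardy--Ramanujan asymptotic $p(n)\sim 2^{-2}3^{-1/2}n^{-1}e^{\pi\sqrt{2n/3}}$ and the observation that $G$ varies slowly across the width of the saddle yields $[q^n]F(q)\sim G(e^{-\tau})\cdot p(n)$; substituting $\tau=\pi/\sqrt{6n}$ collapses precisely to the claim
\[
2^{(3r-12)/4}3^{(r-2)/4}\pi^{-r/2}M^{r/2}s^{-r/2}\,r\,\Gamma(\tfrac{r}{2})\,n^{(r-4)/4}e^{\pi\sqrt{2n/3}}.
\]
The principal obstacle is this last transfer step: one must control $F(q)$ uniformly along a major arc around $q=e^{-\tau}$, bound the minor-arc contribution, and justify the Riemann-sum approximation of $G(e^{-\tau})$ uniformly in a neighborhood of the saddle rather than merely pointwise, while also handling the small-$k$ regime where the linearization of $(A+Mk)^r-(A+M(k-1))^r$ is inaccurate. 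This is the same technical package that underpins Theorem~\ref{th:sigma-asymp}; the genuinely new ingredient is the $M$-dependent rescaling of the exponents $T_k$, which is exactly what causes the factor $M^{-1}$ appearing in Theorem~\ref{th:sigma-asymp} to be replaced by $M^{r/2}$ here.
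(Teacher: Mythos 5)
Your generating function for $F(q)$ and your closed-form rewriting via $T_k$ are exactly the paper's \eqref{eq:varsigma-gf} and \eqref{eq:varsigma-gf-sum}, and your value of $G(e^{-\tau})\sim r\,2^{\frac{r}{2}-1}M^{\frac{r}{2}}(s\tau)^{-\frac{r}{2}}\Gamma(\tfrac r2)$ agrees with the paper's real-axis asymptotic. The genuine gap is the transfer step, which you yourself flag as ``the principal obstacle'' and then do not carry out: the assertion $[q^n]F(q)\sim G(e^{-\tau})\,p(n)$ is precisely the hard analytic content of a circle-method argument, and here it is not routine, because the factor $G(q)$ is a weighted partial theta series with no modular transformation, so uniform major-arc control and minor-arc bounds would have to be established from scratch. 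Moreover, the paper does \emph{not} prove Theorem \ref{th:sigma-asymp} by this route either, so the claim that the needed ``technical package'' already underpins that theorem does not rescue the argument. A secondary (fixable) soft spot is the Riemann-sum/linearization of $(A+Mk)^r-(A+M(k-1))^r$, which is only heuristic as stated.

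The paper closes both gaps cheaply. First, the real-axis asymptotic is obtained rigorously by completing the square in the exponent and applying the Euler--Maclaurin-type expansion for weighted partial theta functions (Lemma \ref{le:asymp-partial}), which also disposes of the small-$k$ regime you mention and yields lower-order terms. Second, instead of any arc decomposition, the paper verifies that the coefficients $\varsigma_{M,A}^{[s]}(r;n)$ are nonnegative and nondecreasing --- immediate from \eqref{eq:varsigma-gf-sum}, since the weights $(Mn+M+A)^r-(Mn+A)^r$ are nonnegative and multiplying by $(1-q)$ merely replaces $(q;q)_\infty^{-1}$ by $(q^2;q)_\infty^{-1}$ --- and then invokes Ingham's Tauberian theorem (Lemma \ref{le:ing}), which converts the single limit $F(e^{-t})\sim 2^{\frac{r-3}{2}}\pi^{-\frac12}M^{\frac r2}s^{-\frac r2}r\,\Gamma(\tfrac r2)\,t^{\frac{1-r}{2}}e^{\frac{\pi^2}{6t}}$ as $t\to0^+$ directly into \eqref{eq:varsigma-asymp}. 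If you replace your unexecuted saddle-point transfer by this monotonicity check plus Ingham's theorem, your computation of $G(e^{-\tau})$ (made rigorous via Lemma \ref{le:asymp-partial}) completes the proof.
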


We have similar ``equalities'' of these moments in the asymptotic
sense.

\begin{corollary}\label{eq:varsigma-asymp-eq}
    For integers $A$, $A'$ and $M$ with $0< A,A'\le M$, positive integers $r$ and positive integers $s$,
    \begin{align}
        \lim_{n\to \infty} \frac{\varsigma_{M,A}^{[s]}(r;n)}{\varsigma_{M,A'}^{[s]}(r;n)} = 1.
    \end{align}
\end{corollary}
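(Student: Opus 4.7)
The plan is to derive Corollary \ref{eq:varsigma-asymp-eq} directly from Theorem \ref{th:varsigma-asymp}. Inspecting the asymptotic formula \eqref{eq:varsigma-asymp}, I would first observe that the leading term
\[
2^{\frac{3r-12}{4}} 3^{\frac{r-2}{4}} \pi^{-\frac{r}{2}} M^{\frac{r}{2}} s^{-\frac{r}{2}} r\,\Gamma(\tfrac{r}{2})\, n^{\frac{r-4}{4}} e^{\pi \sqrt{\frac{2n}{3}}}
\]
depends only on $M$, $s$, $r$, and $n$, and is completely independent of the residue class $A$. Thus applying Theorem \ref{th:varsigma-asymp} once with residue $A$ and once with residue $A'$ produces two sequences with the very same leading-order behaviour.

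Next, I would form the ratio. The common prefactor cancels, and the quotient takes the shape $(1+o(1))/(1+o(1))$, which tends to $1$ as $n\to\infty$. This immediately yields the claimed limit.

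Since Theorem \ref{th:varsigma-asymp} is assumed to be in hand, there is essentially no obstacle; the corollary is a one-line consequence of an asymptotic whose right-hand side carries no dependence on $A$. The conceptual content worth recording is that this $A$-independence reflects the fact that $\varsigma_{M,A}^{[s]}(r;n)$ is dominated by partitions whose $s_{M,A}$-mex is of order $\sqrt{n}$, for which the bounded shift by $A\in\{1,\ldots,M\}$ is asymptotically negligible compared to the size of the excludant itself. The genuine difficulty lies not in this corollary but in Theorem \ref{th:varsigma-asymp}, where one must prove that the subleading $A$-dependent corrections are absorbed into the error term.
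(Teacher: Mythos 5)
Your proposal is correct and matches the paper's (implicit) argument exactly: Corollary \ref{eq:varsigma-asymp-eq} is an immediate consequence of Theorem \ref{th:varsigma-asymp}, since the asymptotic main term in \eqref{eq:varsigma-asymp} is independent of $A$, so the ratio is $(1+o(1))/(1+o(1))\to 1$. Nothing further is needed.
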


\section{Generating functions}

It is known, according to Euler \cite[p.~260, Ch.~XVI]{Eul1988},
that the generating function of the partition function $p(n)$ is
\begin{align*}
    \sum_{n\ge 0} p(n) q^n = \prod_{k\ge 1}\frac{1}{1-q^k} = \frac{1}{(q;q)_\infty},
\end{align*}
wherein we adopt the \emph{$q$-Pochhammer symbol} for complex $q$
with $|q|<1$:
\begin{align*}
    (a;q)_\infty := \prod_{k\ge 0} (1-aq^k).
\end{align*}

Now we look into the generating functions of
$\sigma_{M,A}^{[s]}(r;n)$ and $\varsigma_{M,A}^{[s]}(r;n)$ \`a la
Andrews and Newman \cite{AD2019,AD2020}. Throughout this section, we
assume that $A$ and $M$ with $0< A\le M$ are integers, $r$ is a
nonnegative integer and $s$ is a positive integer.

We first work on $\sigma_{M,A}^{[s]}(r;n)$. It is clear that
\begin{align*}
    &\sum_{n\ge 0}\sigma_{M,A}^{[s]}(r;n) q^n\\
    &\qquad = \sum_{n\ge 0} (Mn+A)^r\big(1+q^{Mn+A}+\cdots + q^{(s-1)(Mn+A)}\big)\\
    &\qquad\quad\times \left(\prod_{k>Mn+A}\frac{1}{1-q^k}\right) \left(\prod_{1\le k<Mn+A}\frac{q^{sk}}{1-q^k}\right)\\
    &\qquad=\sum_{n\ge 0} (Mn+A)^r\frac{1-q^{s(Mn+A)}}{1-q^{Mn+A}}\cdot q^{s(1+2+\cdots (Mn+A-1))} \prod_{\substack{k\ge 1\\k\ne Mn+A}}\frac{1}{1-q^k}.
\end{align*}
Thus the generating function of $\sigma_{M,A}^{[s]}(r;n)$ can be
formulated as follows.

\begin{proposition}
    We have
    \begin{align}\label{eq:sigma-gf}
        \sum_{n\ge 0} \sigma_{M,A}^{[s]}(r;n)q^n = \frac{1}{(q;q)_\infty} \sum_{n\ge 0} (Mn+A)^r q^{\frac{s}{2}(Mn+A)(Mn+A-1)} (1-q^{s(Mn+A)}).
    \end{align}
\end{proposition}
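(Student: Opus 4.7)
The plan is to build the generating function by conditioning on the value $m = \mex^{[s]}(\pi)$, which by hypothesis must be a positive integer congruent to $A$ modulo $M$, hence of the form $m = Mn+A$ for some $n\ge 0$. For each such $m$, I will enumerate the partitions $\pi \vdash N$ with $\mex^{[s]}(\pi) = m$ and weight them by $m^r$, then sum over $m$ and over $N$.

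The main observation is that the condition $\mex^{[s]}(\pi) = m$ decouples the three ``regions'' of parts. First, every positive integer $k < m$ must appear in $\pi$ with frequency at least $s$ (otherwise a smaller $s$-mex would exist); writing each such $k$ as a mandatory block of $s$ copies plus an unrestricted tail contributes the factor $\prod_{1\le k < m} q^{sk}/(1-q^k)$. Second, the part $m$ itself must appear strictly fewer than $s$ times, contributing the truncated geometric series $1 + q^m + \cdots + q^{(s-1)m} = (1-q^{sm})/(1-q^m)$. Third, parts $k>m$ are entirely unrestricted, contributing $\prod_{k>m} 1/(1-q^k)$.

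Multiplying these three factors together with the weight $m^r = (Mn+A)^r$ and summing over $n\ge 0$ gives precisely the displayed pre-proposition formula. To reach the stated compact form, I will (i) evaluate the arithmetic progression $\sum_{k=1}^{m-1} sk = \tfrac{s}{2}m(m-1)$ in the exponent of the prefactor $q^{sk}$, and (ii) merge the missing factor $1/(1-q^m)$ with the product $\prod_{k\ne m, k\ge 1} 1/(1-q^k)$ to recover the full Euler product $1/(q;q)_\infty$, at the cost of canceling the $(1-q^m)$ in the denominator of the middle factor against the newly introduced one. After these two routine simplifications, the identity reduces exactly to \eqref{eq:sigma-gf}.

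There is no real obstacle here: the only thing to be careful about is the bookkeeping in the case $m = A = 1$ (so $n=0$ and the product over $1\le k < m$ is empty, giving the convention $q^0 = 1$, consistent with $\tfrac{s}{2}\cdot 1\cdot 0 = 0$), and in the boundary case $s = 1$, where the factor $(1-q^{sm})/(1-q^m) = 1$ correctly expresses that the part $m$ is forbidden. Both boundary cases are consistent with the formula as written, so the derivation goes through verbatim.
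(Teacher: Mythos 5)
Your proposal is correct and follows essentially the same route as the paper: condition on the value $m=Mn+A$ of the $s$-mex, factor the generating function into the three regions (parts $k<m$ forced to have frequency at least $s$, part $m$ with frequency at most $s-1$ giving $(1-q^{sm})/(1-q^m)$, parts $k>m$ unrestricted), then sum the arithmetic progression in the exponent and absorb the product into $1/(q;q)_\infty$. No gaps.
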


For $\varsigma_{M,A}^{[s]}(r;n)$, we have
\begin{align*}
    &\sum_{n\ge 0}\varsigma_{M,A}^{[s]}(r;n) q^n\\
    &\qquad = \prod_{\substack{k\ge 1\\k\not\equiv A \bmod{M}}}\frac{1}{1-q^k}\cdot \sum_{n\ge 0} (Mn+A)^r\big(1+q^{Mn+A}+\cdots + q^{(s-1)(Mn+A)}\big)\\
    &\qquad\quad\times \left(\prod_{k>n}\frac{1}{1-q^{Mk+A}}\right)\left(\prod_{0\le k<n}\frac{q^{s(Mk+A)}}{1-q^{Mk+A}}\right)\\
    &\qquad=\prod_{\substack{k\ge 1\\k\not\equiv A \bmod{M}}}\frac{1}{1-q^k}\cdot \sum_{n\ge 0} (Mn+A)^r\frac{1-q^{s(Mn+A)}}{1-q^{Mn+A}}\\
    &\qquad\quad\times q^{s(A+(M+A)+\cdots+(M(n-1)+A))} \prod_{\substack{k\ge 0\\k\ne n}}\frac{1}{1-q^{Mk+A}}.
\end{align*}

\begin{proposition}
    We have
    \begin{align}\label{eq:varsigma-gf}
        \sum_{n\ge 0} \varsigma_{M,A}^{[s]}(r;n)q^n = \frac{1}{(q;q)_\infty} \sum_{n\ge 0} (Mn+A)^r q^{s(\frac{1}{2}Mn(n-1)+An)} (1-q^{s(Mn+A)}).
    \end{align}
\end{proposition}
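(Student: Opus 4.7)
The plan is to partition the set of all partitions of $n$ according to the value of the $s_{M,A}$-mex. Since $\mex_{M,A}^{[s]}(\pi)$ necessarily lies in the residue class $A$ modulo $M$, I would write it as $MN+A$ for some nonnegative integer $N$, and then compute, for each $N$, the generating function for those partitions $\pi$ with $\mex_{M,A}^{[s]}(\pi)=MN+A$, weighted by $(MN+A)^r$. Summing over $N$ produces $\sum_n \varsigma_{M,A}^{[s]}(r;n)q^n$.

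For a fixed $N$, the defining condition $\mex_{M,A}^{[s]}(\pi)=MN+A$ imposes the following on $\pi$: (i) each of the parts $A,M+A,\ldots,M(N-1)+A$ must appear at least $s$ times; (ii) the part $MN+A$ appears strictly fewer than $s$ times; (iii) every other part (parts not congruent to $A$ modulo $M$, together with parts of the form $Mk+A$ with $k>N$) is unrestricted. I would translate (i)--(iii) into generating-function factors: condition (i) contributes $q^{s(A+(M+A)+\cdots+(M(N-1)+A))}\prod_{0\le k<N}(1-q^{Mk+A})^{-1}$, whose $q$-exponent collapses to $s\bigl(\tfrac{1}{2}MN(N-1)+AN\bigr)$; condition (ii) contributes the truncated geometric factor $(1-q^{s(MN+A)})/(1-q^{MN+A})$; condition (iii) contributes $\prod_{k\ge 1,\,k\not\equiv A\bmod{M}}(1-q^k)^{-1}\cdot\prod_{k>N}(1-q^{Mk+A})^{-1}$.

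The key observation is that multiplying the three product-over-parts factors and the denominator $(1-q^{MN+A})$ from (ii) rebuilds the full product $\prod_{k\ge 1}(1-q^k)^{-1}=1/(q;q)_\infty$, since together the indices range over every positive integer exactly once. Thus, after combining everything and weighting by $(MN+A)^r$, the contribution of $N$ equals
\[
\frac{(MN+A)^r\,q^{s(\frac{1}{2}MN(N-1)+AN)}(1-q^{s(MN+A)})}{(q;q)_\infty},
\]
and summing over $N\ge 0$ gives \eqref{eq:varsigma-gf}.

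Since the classification by the value of $\mex_{M,A}^{[s]}(\pi)$ is automatically exhaustive and disjoint, no analytic input is required and the argument is a purely formal manipulation entirely parallel to the derivation of \eqref{eq:sigma-gf}. The only real step to watch is the bookkeeping in (iii), namely checking that the union of the index sets $\{k\ge 1:k\not\equiv A\bmod M\}$, $\{Mk+A:0\le k<N\}$, $\{MN+A\}$, and $\{Mk+A:k>N\}$ partitions the positive integers; this is immediate, but it is the one spot where an off-by-one slip could happen.
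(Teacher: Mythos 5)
Your proposal is correct and follows essentially the same route as the paper: classify partitions by the value $Mn+A$ of the $s_{M,A}$-mex, encode the forced frequencies $\ge s$ for smaller parts in the class, the truncated geometric factor $(1-q^{s(Mn+A)})/(1-q^{Mn+A})$ for the mex value itself, and unrestricted factors elsewhere, then recombine the products into $1/(q;q)_\infty$. The bookkeeping you flag in (iii) is exactly the step the paper carries out, with the exponent collapsing to $s\bigl(\tfrac{1}{2}Mn(n-1)+An\bigr)$ as you state.
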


\begin{remark}
    It is clear that when $r=0$,
    \begin{align*}
        &\sum_{n\ge 0} q^{s(\frac{1}{2}Mn(n-1)+An)} (1-q^{s(Mn+A)})\\
        &\qquad = 1 + \sum_{n\ge 1} q^{s(\frac{1}{2}Mn(n-1)+An)} - \sum_{n\ge 0} q^{s(\frac{1}{2}Mn(n+1)+A(n+1))} = 1.
    \end{align*}
    Hence, the relation $\varsigma_{M,A}^{[s]}(0;n)=p(n)$ as given in \eqref{eq:varsigma-0-p} is recovered. In general, the sum on the right-hand side of \eqref{eq:varsigma-gf} can be rewritten as
    \begin{align}\label{eq:varsigma-gf-sum}
        &\sum_{n\ge 0} (Mn+A)^r q^{s(\frac{1}{2}Mn(n-1)+An)} (1-q^{s(Mn+A)})\notag\\
        &\qquad = A^r + \sum_{n\ge 0} (M(n+1)+A)^r q^{s(\frac{1}{2}Mn(n+1)+A(n+1))}\notag\\
        &\qquad\quad - \sum_{n\ge 0} (Mn+A)^r q^{s(\frac{1}{2}Mn(n+1)+A(n+1))}.
    \end{align}
    This reformulation will facilitate our proof of Theorem \ref{th:varsigma-asymp}.
\end{remark}

\section{Lemmas}

Here we collect some lemmas for our asymptotic analysis in the next
section. Throughout, the \emph{Bernoulli polynomials} $B_n(x)$ are
defined by the exponential generating function:
\begin{align*}
    \sum_{n\ge 0} B_n(x)\frac{t^n}{n!} = \frac{te^{xt}}{e^t - 1}.
\end{align*}

We begin with an asymptotic estimation for weighted partial theta
functions.

\begin{lemma}\label{le:asymp-partial}
    Let $u$ be a positive real number and let $r$ be a nonnegative integer. As $t\to 0^+$, it is true for any $N\ge 1$ that
    \begin{align}\label{eq:asymp-partial}
        \sum_{n\ge 0} (n+u)^r e^{-(n+u)^2t^2} = \frac{\Gamma(\tfrac{r+1}{2})}{2t^{r+1}}-\sum_{n= 0}^{N-1}\frac{(-1)^n B_{2n+r+1}(u)}{(2n+r+1)n!}t^{2n} + O(t^{2N}).
    \end{align}
\end{lemma}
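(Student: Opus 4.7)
My plan is to use Mellin--Barnes contour integration. Writing $\tau := t^2$, view the left-hand side as $S(\tau) := \sum_{n\ge 0}(n+u)^r e^{-(n+u)^2 \tau}$. Interchanging sum and integral, valid when $\operatorname{Re}(w) > (r+1)/2$, yields the Mellin transform
\[
\int_0^\infty \tau^{w-1} S(\tau)\, d\tau
= \sum_{n\ge 0}(n+u)^r \cdot \frac{\Gamma(w)}{(n+u)^{2w}}
= \Gamma(w)\, \zeta(2w-r, u),
\]
where $\zeta(s, u)$ denotes the Hurwitz zeta function. By Mellin inversion, for any $c > (r+1)/2$,
\[
S(\tau) = \frac{1}{2\pi i}\int_{c-i\infty}^{c+i\infty} \Gamma(w)\, \zeta(2w-r, u)\, \tau^{-w}\, dw.
\]

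Next I would shift the contour leftward to the vertical line $\operatorname{Re}(w) = -N - \varepsilon$ for a small $\varepsilon > 0$, collecting residues as they are crossed. The Hurwitz zeta function contributes a single simple pole of $\zeta(\cdot,u)$ at argument $1$, i.e.\ at $w = (r+1)/2$, with residue $\tfrac{1}{2}$; this produces the leading term $\Gamma(\tfrac{r+1}{2})/(2\,t^{r+1})$. The remaining poles come from $\Gamma$ at $w = -n$ for $n = 0, 1, \ldots, N$, each simple with residue $(-1)^n/n!$. The classical identity $\zeta(-k, u) = -B_{k+1}(u)/(k+1)$ applied at $k = 2n+r$ converts each such residue into
\[
-\frac{(-1)^n B_{2n+r+1}(u)}{(2n+r+1)\,n!}\, t^{2n}.
\]
Summing the contributions for $n = 0, \ldots, N-1$ reproduces the finite polynomial on the right of \eqref{eq:asymp-partial}, while the $n = N$ residue is already $O(t^{2N})$ and is absorbed into the error.

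The remaining task is to bound the shifted integral on $\operatorname{Re}(w) = -N - \varepsilon$ by $O(\tau^{N+\varepsilon}) = O(t^{2N})$. This rests on two standard inputs: Stirling's formula, which gives exponential decay $|\Gamma(\sigma + iT)| \ll |T|^{\sigma - 1/2} e^{-\pi|T|/2}$ as $|T|\to\infty$, and Hurwitz's functional equation, which ensures polynomial growth of $\zeta(2w-r, u)$ in any vertical strip. The exponential decay of $\Gamma$ dominates the polynomial growth of $\zeta$, and $|\tau^{-w}| = \tau^{N+\varepsilon}$ on the new line, so the shifted integral converges absolutely with modulus $O(\tau^{N+\varepsilon})$ uniformly in $\tau$. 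The main obstacle is simply to verify these two growth bounds uniformly on the contour and to justify the contour shift (the truncated contours closed by horizontal segments contribute negligibly by the same Stirling decay); once this is in hand, the residue arithmetic carried out above is routine.
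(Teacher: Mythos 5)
Your proof is correct, but it takes a genuinely different route from the paper. The paper disposes of the lemma in a few lines by rescaling $x\mapsto (n+u)t$ and citing Zagier's Euler--Maclaurin expansion \cite[eq.~(6.76)]{Zag2006} with $f(x)=x^r e^{-x^2}$, $I_f=\Gamma(\tfrac{r+1}{2})/2$ (even correcting a sign misprint in Zagier via $\zeta(-n,a)=-B_{n+1}(a)/(n+1)$). You instead give a self-contained Mellin--Barnes argument: the transform $\Gamma(w)\zeta(2w-r,u)$, a shift of the contour to $\operatorname{Re}(w)=-N-\varepsilon$, the pole of the Hurwitz zeta at $w=\tfrac{r+1}{2}$ with residue $\tfrac12$ producing the leading term, and the poles of $\Gamma$ at $w=-n$ combined with the same identity $\zeta(-k,u)=-B_{k+1}(u)/(k+1)$ producing the finite sum; your residue arithmetic and signs all check out, and absorbing the $w=-N$ residue into the $O(t^{2N})$ error is the right move. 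What each approach buys: the paper's proof is essentially a citation and is as short as possible, while yours is independent of Zagier, fixes the sign unambiguously, and makes the error term transparent through the shifted integral. The only points you should flesh out are routine: justification of the term-by-term Mellin transform and polynomial growth of $\zeta(2w-r,u)$ on vertical lines for general $u>0$ (the Hurwitz functional equation is usually stated for $0<u\le 1$, so either split off finitely many terms of the series to reduce to that range, or bound the zeta directly by Euler--Maclaurin); neither is a gap in substance.
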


\begin{proof}
    Asymptotic relations of this type were systematically analyzed in the work of Zagier \cite{Zag2006}, who had utilized the Euler--Maclaurin summation formula. In particular, in \cite[p.~321, eq.~(6.76)]{Zag2006}\footnote{This equation should be corrected as
        \begin{align*}
            \sum_{m=0}^\infty f\big((m+a)x\big) \sim \frac{I_f}{x} - \sum_{n=0}^\infty b_n \frac{B_{n+1}(a)}{n+1} x^n,
        \end{align*}
        since \cite[p.~314, eq.~(6.55)]{Zag2006} reads $\zeta(-n,a) = - \frac{B_{n+1}(a)}{n+1}$ so that the sign before the summation on the right-hand side of the above should be negative.
    }, we choose
    \begin{align*}
        f(x) = x^r e^{-x^2} = \sum_{n\ge 0} \frac{(-1)^n}{n!} x^{2n+r},
    \end{align*}
    so that
    \begin{align*}
        \int_0^\infty f(x) dx = \int_0^\infty x^r e^{-x^2} dx = \frac{\Gamma(\tfrac{r+1}{2})}{2}.
    \end{align*}
    Noting the fact that
    \begin{align*}
        \sum_{n\ge 0} (n+u)^r e^{-(n+u)^2t^2} = \frac{1}{t^r} \sum_{n\ge 0} \big((n+u)t\big) e^{-((n+u)t)^2},
    \end{align*}
    we arrive at \eqref{eq:asymp-partial} by invoking Zagier's result.
\end{proof}

Next, we recall Ingham's Tauberian theorem \cite{Ing1941}.

\begin{lemma}\label{le:ing}
    Let $F(q)=\sum_{n\ge0}f(n) q^n$ be a power series with eventually nondecreasing and nonnegative coefficients such that its radius of convergence equals $1$. If there are constants $A>0$ and $\lambda,\alpha\in\mathbb{R}$ such that
    $$F(e^{-t}) \sim\lambda\, t^\alpha e^{\frac{A}{t}}$$
    as $t\to 0^+$, then
    $$f(n)\sim \frac{\lambda}{2\sqrt{\pi}}\frac{A^{\frac{\alpha}{2}+\frac14}}{n^{\frac{\alpha}{2}+\frac34}}\, e^{2\sqrt{An}}$$
    as $n\to \infty$.
\end{lemma}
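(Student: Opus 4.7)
The plan is to prove Lemma \ref{le:ing} by combining Cauchy's integral formula with a saddle-point analysis of the resulting contour integral, which is the classical circle-method style argument employed by Ingham. First I would write, for any $r\in(0,1)$,
\begin{align*}
f(n) = \frac{1}{2\pi i}\oint_{|q|=r} \frac{F(q)}{q^{n+1}}\,dq,
\end{align*}
and then set $r = e^{-t_0}$ with $t_0>0$ chosen as the saddle point of $\tfrac{A}{t}+nt$, namely $t_0 = \sqrt{A/n}$. Parametrizing $q = e^{-t_0+i\theta}$ with $\theta\in[-\pi,\pi]$ transforms the integral into
\begin{align*}
f(n) = \frac{e^{nt_0}}{2\pi}\int_{-\pi}^{\pi} F\!\left(e^{-t_0+i\theta}\right) e^{-in\theta}\,d\theta.
\end{align*}

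Next I would split the $\theta$-range into a major arc $|\theta|\le \theta_0(n)$ with $\theta_0\to 0$ chosen slightly larger than $t_0$, and a minor arc $|\theta|>\theta_0$. On the major arc, I would substitute the hypothesized asymptotic $F(e^{-t_0+i\theta}) \sim \lambda (t_0-i\theta)^\alpha e^{A/(t_0-i\theta)}$ and expand the exponent around $\theta=0$ to second order; the quadratic term $-\tfrac{A}{t_0^3}\theta^2$ produces a Gaussian, whose evaluation against $e^{-in\theta}$ and combination with the prefactor $e^{nt_0}\cdot\lambda t_0^\alpha e^{A/t_0} = \lambda t_0^\alpha e^{2\sqrt{An}}$ gives precisely the claimed main term $\frac{\lambda}{2\sqrt{\pi}}A^{\alpha/2+1/4} n^{-\alpha/2-3/4} e^{2\sqrt{An}}$.

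The hard part is the minor arc, since the hypothesis of Lemma \ref{le:ing} only provides asymptotic information about $F(e^{-t})$ as $t\to 0^+$ along the real axis. To beat the main term, one must show that $|F(e^{-t_0+i\theta})|$ is strictly smaller in exponential order than $F(e^{-t_0})$ once $|\theta|\ge \theta_0$; this is precisely where the Tauberian hypothesis that the coefficients $f(n)$ are eventually nondecreasing and nonnegative becomes indispensable. Via an Abel-summation argument one obtains a uniform bound of the form $|F(e^{-t_0+i\theta})| = o(F(e^{-t_0}))$ on the minor arc, which in turn makes its contribution negligible. Since a fully rigorous minor-arc estimate is technical, I would quote Ingham's original treatment \cite{Ing1941} for this step rather than redo it from scratch.
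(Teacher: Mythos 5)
You should first note that the paper does not prove this lemma at all: it is Ingham's Tauberian theorem, recalled verbatim with a citation to \cite{Ing1941}, so there is no internal argument to compare with. Your proposal attempts to reprove the cited result by Cauchy's formula plus a saddle point at $t_0=\sqrt{A/n}$; the major-arc computation is fine (the Gaussian factor $\sqrt{\pi t_0^3/A}$ against $\lambda t_0^{\alpha}e^{2\sqrt{An}}$ does reproduce the stated constant), but since you end by ``quoting Ingham'' for the only hard step, the proposal does not actually deliver more than the citation the paper already makes.

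The genuine gap is the minor-arc claim. The hypothesis gives asymptotics of $F(e^{-t})$ only for real $t\to 0^+$, and the Abel-summation consequence of eventually nondecreasing nonnegative coefficients is merely $|(1-q)F(q)|\le (1-r)F(r)+O(1)$ on $|q|=r$, i.e. $|F(e^{-t_0+i\theta})|\ll \frac{t_0}{t_0+|\theta|}\,F(e^{-t_0})$, a polynomial saving — not the bound $|F(e^{-t_0+i\theta})|=o(F(e^{-t_0}))$ in a form strong enough to help. Indeed, the main term is smaller than $e^{nt_0}F(e^{-t_0})$ by a factor of order $t_0^{3/2}$, whereas integrating the Abel-type bound over $\theta_0<|\theta|\le\pi$ gives a minor-arc contribution of order $t_0\log(1/\theta_0)$ times $e^{nt_0}F(e^{-t_0})$, which \emph{dominates} the main term. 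This is precisely why Ingham's statement is a Tauberian theorem with a genuinely different proof (a comparison/Tauberian argument exploiting monotonicity directly on the coefficients), and why circle-method proofs of such asymptotics (Wright-style) require the expansion of $F$ to hold in a complex sector near $q=1$ together with real decay estimates away from $q=1$ — neither of which is available from the hypotheses here. Moreover, Ingham's paper contains no ``minor-arc estimate'' of the kind you propose to quote, so the deferral does not patch the argument. As written, the proof would fail at this step; the correct course in the context of this paper is simply to cite Ingham's theorem, as the authors do.
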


\section{Asymptotic analysis}

In this section, we first prove Theorem \ref{th:sigma-asymp}.

\begin{proof}[Proof of Theorem \ref{th:sigma-asymp}]
    We first examine that the moments $\sigma_{M,A}^{[s]}(r;n)$ are eventually nondecreasing and nonnegative. The nonnegativity is apparent as
    \begin{align*}
        \sum_{n\ge 0} \sigma_{M,A}^{[s]}(r;n)q^n = \sum_{n\ge 0} (Mn+A)^r q^{\frac{s}{2}(Mn+A)(Mn+A-1)} \prod_{\substack{k\ge 1\\k\ne s(Mn+A)}} \frac{1}{1-q^k},
    \end{align*}
    wherein each summand is a series with nonnegative coefficients. To see the eventual monotonicity, we notice that when $s\ge 2$, the number $s(Mn+A)$ cannot be $1$ for any nonnegative $n$, thereby implying that
    \begin{align*}
        &(1-q)\cdot \sum_{n\ge 0} \sigma_{M,A}^{[s]}(r;n)q^n\\
        &\qquad = \sum_{n\ge 0} (Mn+A)^r q^{\frac{s}{2}(Mn+A)(Mn+A-1)} \prod_{\substack{k\ge 2\\k\ne s(Mn+A)}} \frac{1}{1-q^k}
    \end{align*}
    is a nonnegative series, and hence that the numbers $\sigma_{M,A}^{[s]}(r;n)$ are nondecreasing in this circumstance. Now we consider the case where $s=1$. Note that
    \begin{align*}
        &\sum_{n\ge 0} \sigma_{M,A}^{[1]}(r;n)q^n \\
        &\quad = \frac{A^rq^{\frac{1}{2}A(A-1)} (1-q^{A})}{(q;q)_\infty} + \frac{1}{(q;q)_\infty} \sum_{n\ge 1} (Mn+A)^r q^{\frac{1}{2}(Mn+A)(Mn+A-1)} (1-q^{Mn+A}).
    \end{align*}
    Clearly, $Mn+A\ne 1$ when $n\ge 1$ because $A$ and $M$ are positive integers. Thus, the latter sum in the above is a series with nondecreasing coefficients since
    \begin{align*}
        &(1-q)\cdot \frac{1}{(q;q)_\infty} \sum_{n\ge 1} (Mn+A)^r q^{\frac{1}{2}(Mn+A)(Mn+A-1)} (1-q^{Mn+A})\\
        &\qquad = \sum_{n\ge 1} (Mn+A)^r q^{\frac{1}{2}(Mn+A)(Mn+A-1)} \prod_{\substack{k\ge 2\\k\ne Mn+A}} \frac{1}{1-q^k}
    \end{align*}
    is a series with nonnegative coefficients. For the remaining term $A^rq^{\frac{1}{2}A(A-1)} (1-q^{A})(q;q)_\infty^{-1}$, we see that when $A\ge 2$,
    \begin{align*}
        (1-q)\cdot \frac{A^rq^{\frac{1}{2}A(A-1)} (1-q^{A})}{(q;q)_\infty} = A^rq^{\frac{1}{2}A(A-1)} \prod_{\substack{k\ge 2\\k\ne A}} \frac{1}{1-q^k}
    \end{align*}
    is a nonnegative series so that $A^rq^{\frac{1}{2}A(A-1)} (1-q^{A})(q;q)_\infty^{-1}$ itself is a series with nondecreasing coefficients. When $A=1$,
    \begin{align*}
        \frac{A^rq^{\frac{1}{2}A(A-1)} (1-q^{A})}{(q;q)_\infty} = \frac{1}{(q^2;q)_\infty} = \sum_{n\ge 0} p_{>1}(n)q^n,
    \end{align*}
    where $p_{>1}(n)$ enumerates the number of \emph{non-unitary partitions} (i.e., partitions with no part equal to one) of $n$. It is notable that $p_{>1}(n+1)\ge p_{>1}(n)$ with only one exception that $p_{>1}(1) = 0 < 1 = p_{>1}(0)$ since for $n\ge 1$ there is a natural injection from non-unitary partitions of $n$ to non-unitary partitions of $n+1$ by adding $1$ to the largest part of the former. Hence, we also have the eventual monotonicity when $A=1$, thereby completing the discussion for all circumstances.

    In view of Ingham's Tauberian theorem, it suffices to estimate the right-hand side of \eqref{eq:sigma-gf} with $q=e^{-t}$ when $t\to 0^+$. To fulfill this goal, we start with a reformulation of the sum on the right-hand side of \eqref{eq:sigma-gf} as
    \begin{align*}
        \sum_{n\ge 0} (Mn+A)^r q^{\frac{s}{2}(Mn+A)(Mn+A-1)} (1-q^{s(Mn+A)}) = S_1(q) - S_2(q),
    \end{align*}
    where
    \begin{align*}
        S_1(q) &:= \sum_{n\ge 0} (Mn+A)^r q^{\frac{s}{2}(Mn+A)(Mn+A-1)},\\
        S_2(q) &:= \sum_{n\ge 0} (Mn+A)^r q^{\frac{s}{2}(Mn+A)(Mn+A+1)}.
    \end{align*}

    When $r=0$, we see that
    \begin{align*}
        S_1(e^{-t}) &= e^{-\frac{s}{2}A(A-1)t} + e^{\frac{st}{8}} \sum_{n\ge 1} e^{-\frac{s}{2}(Mn+A-\frac{1}{2})^2 t}\\
        &= e^{-\frac{s}{2}A(A-1)t} + e^{\frac{st}{8}} \sum_{n\ge 0} e^{-\frac{sM^2}{2}(n+1+\frac{A}{M}-\frac{1}{2M})^2 t}\\
        &= \tfrac{1}{M}\sqrt{\tfrac{\pi}{2}}\,s^{-\frac{1}{2}}t^{-\frac{1}{2}} + 1-B_1\big(1+\tfrac{A}{M}-\tfrac{1}{2M}\big) + O(t^{\frac{1}{2}}),
    \end{align*}
    and that
    \begin{align*}
        S_2(e^{-t}) &= e^{\frac{st}{8}} \sum_{n\ge 0} e^{-\frac{sM^2}{2}(n+\frac{A}{M}+\frac{1}{2M})^2 t}\\
        &= \tfrac{1}{M}\sqrt{\tfrac{\pi}{2}}\,s^{-\frac{1}{2}}t^{-\frac{1}{2}} -B_1\big(\tfrac{A}{M}+\tfrac{1}{2M}\big) + O(t^{\frac{1}{2}}).
    \end{align*}
    We have applied Lemma \ref{le:asymp-partial} for both asymptotic relations above as $t\to 0^+$. Hence,
    \begin{align*}
        \sum_{n\ge 0} q^{\frac{s}{2}(Mn+A)(Mn+A-1)} (1-q^{s(Mn+A)})\Big\rvert_{q=e^{-t}} \sim M^{-1},
    \end{align*}
    where we have used the fact that $B_1(x)=x-\frac{1}{2}$. Finally, we recall the modular inversion formula for Dedekind's eta function (see, for example, \cite[p.~121, Proposition 14]{Kob1984}), which implies that as $t\to 0^+$,
    \begin{align*}
        (e^{-t};e^{-t})_\infty\sim \sqrt{2\pi}\,t^{-\frac{1}{2}}e^{-\frac{\pi^2}{6t}}.
    \end{align*}
    Hence,
    \begin{align}\label{eq:sigma-0-q-asymp}
        \sum_{n\ge 0} \sigma_{M,A}^{[s]}(0;n)e^{-nt} \sim \tfrac{1}{\sqrt{2\pi}}\,M^{-1}t^{\frac{1}{2}}e^{\frac{\pi^2}{6t}} \qquad (t\to 0^+).
    \end{align}

    When $r\ge 1$, we have
    \begin{align*}
        S_1(e^{-t}) &= A^{r}e^{-\frac{s}{2}A(A-1)t} + e^{\frac{st}{8}} \sum_{n\ge 1} (Mn+A)^r e^{-\frac{s}{2}(Mn+A-\frac{1}{2})^2 t}\\
        &= A^re^{-\frac{s}{2}A(A-1)t} + e^{\frac{st}{8}} \sum_{n\ge 0} \big[M(n+1+\tfrac{A}{M}-\tfrac{1}{2M})+\tfrac{1}{2}\big]^r e^{-\frac{sM^2}{2}(n+1+\frac{A}{M}-\frac{1}{2M})^2 t}\\
        &= 2^{\frac{r-1}{2}}M^{-1}s^{-\frac{r+1}{2}}\,\Gamma(\tfrac{r+1}{2})\, t^{-\frac{r+1}{2}} + 2^{\frac{r-4}{2}}M^{-1}s^{-\frac{r}{2}}r\,\Gamma(\tfrac{r}{2})\, t^{-\frac{r}{2}} + O(t^{-\frac{r-1}{2}}).
    \end{align*}
    Meanwhile,
    \begin{align*}
        S_2(e^{-t}) &= e^{\frac{st}{8}} \sum_{n\ge 0} \big[M(n+\tfrac{A}{M}+\tfrac{1}{2M})-\tfrac{1}{2}\big]^r e^{-\frac{sM^2}{2}(n+\frac{A}{M}+\frac{1}{2M})^2 t}\\
        &= 2^{\frac{r-1}{2}}M^{-1}s^{-\frac{r+1}{2}}\,\Gamma(\tfrac{r+1}{2})\, t^{-\frac{r+1}{2}} - 2^{\frac{r-4}{2}}M^{-1}s^{-\frac{r}{2}}r\,\Gamma(\tfrac{r}{2})\, t^{-\frac{r}{2}} + O(t^{-\frac{r-1}{2}}).
    \end{align*}
    Therefore,
    \begin{align*}
        \sum_{n\ge 0} (Mn+A)^r q^{\frac{s}{2}(Mn+A)(Mn+A-1)} (1-q^{s(Mn+A)})\Big\rvert_{q=e^{-t}} \sim 2^{\frac{r-2}{2}}M^{-1}s^{-\frac{r}{2}}r\,\Gamma(\tfrac{r}{2})\, t^{-\frac{r}{2}}.
    \end{align*}
    Now, invoking the asymptotic formula for $(e^{-t};e^{-t})_\infty$, it follows that for $r\ge 1$,
    \begin{align}\label{eq:sigma-r-q-asymp}
        \sum_{n\ge 0} \sigma_{M,A}^{[s]}(r;n)e^{-nt} \sim 2^{\frac{r-3}{2}}\pi^{-\frac{1}{2}}M^{-1}s^{-\frac{r}{2}}r\,\Gamma(\tfrac{r}{2})\, t^{\frac{1-r}{2}} e^{\frac{\pi^2}{6t}} \qquad (t\to 0^+).
    \end{align}

    Finally, with a direct application of Ingham's Tauberian theorem to \eqref{eq:sigma-0-q-asymp} and \eqref{eq:sigma-r-q-asymp}, we are led to the desired relation \eqref{eq:sigma-asymp}.
\end{proof}

Next, we move on to the proof of Theorem \ref{th:varsigma-asymp}.

\begin{proof}[Proof of Theorem \ref{th:varsigma-asymp}]
    In light of \eqref{eq:varsigma-gf-sum}, we see that
    \begin{align*}
        &\sum_{n\ge 0} \varsigma_{M,A}^{[s]}(r;n)q^n\\
        &\qquad = \frac{1}{(q;q)_\infty} \left(A^r + \sum_{n\ge 0} \big[(Mn+M+A)^r-(Mn+A)^r\big] q^{s(\frac{1}{2}Mn(n+1)+A(n+1))}\right)
    \end{align*}
    is a nonnegative series since $(Mn+M+A)^r-(Mn+A)^r$ is nonnegative for any $n\ge 0$. Meanwhile, $(1-q)\cdot \sum_{n\ge 0} \varsigma_{M,A}^{[s]}(r;n)q^n$ is also a nonnegative series because we only need to replace $(q;q)_\infty^{-1}$ with $(q^2;q)_\infty^{-1}$ in the above. Thus, the moments $\varsigma_{M,A}^{[s]}(r;n)$ are nondecreasing and nonnegative.

    Now we are left to evaluate $\sum_{n\ge 0} \varsigma_{M,A}^{[s]}(r;n)e^{-nt}$ as $t\to 0^+$. It is clear from \eqref{eq:varsigma-gf-sum} that
    \begin{align*}
        &\sum_{n\ge 0} (Mn+A)^r q^{s(\frac{1}{2}Mn(n-1)+An)} (1-q^{s(Mn+A)})\\
        &\qquad = A^r + q^{-\frac{sM}{2}(\frac{A}{M}-\frac{1}{2})^2}\sum_{n\ge 0} \big[M(n+\tfrac{A}{M}+\tfrac{1}{2})+\tfrac{M}{2}\big]^r q^{\frac{sM}{2}(n+\frac{A}{M}+\frac{1}{2})^2}\\
        &\qquad\quad - q^{-\frac{sM}{2}(\frac{A}{M}-\frac{1}{2})^2}\sum_{n\ge 0} \big[M(n+\tfrac{A}{M}+\tfrac{1}{2})-\tfrac{M}{2}\big]^r q^{\frac{sM}{2}(n+\frac{A}{M}+\frac{1}{2})^2}.
    \end{align*}
    Hence, an application of Lemma \ref{le:asymp-partial} gives
    \begin{align*}
        \sum_{n\ge 0} (Mn+A)^r q^{s(\frac{1}{2}Mn(n-1)+An)} (1-q^{s(Mn+A)})\Big\rvert_{q=e^{-t}} \sim 2^{\frac{r-2}{2}}M^{\frac{r}{2}}s^{-\frac{r}{2}}r\,\Gamma(\tfrac{r}{2})\, t^{-\frac{r}{2}}.
    \end{align*}
    We further invoke the contribution from $(e^{-t};e^{-t})_\infty^{-1}$ and get
    \begin{align}\label{eq:varsigma-r-q-asymp}
        \sum_{n\ge 0} \varsigma_r(a,b;n)e^{-nt} \sim 2^{\frac{r-3}{2}}\pi^{-\frac{1}{2}}M^{\frac{r}{2}}s^{-\frac{r}{2}}r\,\Gamma(\tfrac{r}{2})\, t^{\frac{1-r}{2}} e^{\frac{\pi^2}{6t}} \qquad (t\to 0^+).
    \end{align}

    Finally, \eqref{eq:varsigma-asymp} follows by applying Ingham's Tauberian theorem to \eqref{eq:varsigma-r-q-asymp}.
\end{proof}

\section{Conclusion}

It should be noted that Ingham's Tauberian theorem only gives the
dominant term of the asymptotic estimation in question, while it
talks nothing about the errors. Such a type of estimation is
sufficient to demonstrate asymptotic ``equalities'' like our
Corollaries \ref{eq:sigma-asymp-eq} and \ref{eq:varsigma-asymp-eq}.
However, when it comes to asymptotic ``inequalities'' such as the
eventual log-concavity or bias of $\sigma_{M,A}^{[s]}(r;n)$ and
$\varsigma_{M,A}^{[s]}(r;n)$ as inquired below, a more precise
asymptotic expansion becomes necessary. For this purpose, we expect
a delicate application of the circle method, which will be left for
future research.

\begin{problem}[Log-concavity]
    Fix $r$, $s$, $A$ and $M$. Is it true for all sufficiently large $n$ that
    \begin{align}
        \sigma_{M,A}^{[s]}(r;n)^2 &> \sigma_{M,A}^{[s]}(r;n-1)\sigma_{M,A}^{[s]}(r;n+1),\\
        \varsigma_{M,A}^{[s]}(r;n)^2 &> \varsigma_{M,A}^{[s]}(r;n-1)\varsigma_{M,A}^{[s]}(r;n+1).
    \end{align}
\end{problem}

\begin{problem}[Bias]
    Fix $r$, $s$ and $M$. Is there a reordering $A_1,A_2,\ldots,A_M$ of $1,2,\ldots, M$ such that for all sufficiently large $n$,
    \begin{align}
        \sigma_{M,A_1}^{[s]}(r;n)\le \sigma_{M,A_2}^{[s]}(r;n)\le \cdots \le \sigma_{M,A_M}^{[s]}(r;n).
    \end{align}
    The same question may also be asked for $\varsigma_{M,A}^{[s]}(r;n)$.
\end{problem}

\subsection*{Acknowledgements}

Ernest X. W. Xia was supported by the National Natural Science
Foundation of China (no.~12371334) and the Natural Science
Foundation of Jiangsu Province of China (no.~BK20221383).

   \noindent{\bf Data Availability
   Statements.} Data
sharing not applicable to this article as no datasets were generated
or
 analysed during the current study.

 \noindent{\bf Declaration of
  Competing Interest.}
 The authors  declared that  they have
  no
 conflicts of interest to this work.

\bibliographystyle{amsplain}

\end{document}